\renewcommand{\phi}{\varphi}
\newtheorem{theorem}{Theorem}[section]
\newtheorem{thm}{Theorem}[section]
\newtheorem{thesis}[theorem]{Thesis}
\newtheorem{statement}[theorem]{Statement}
\newtheorem{lemma}[theorem]{Lemma}
\newtheorem{corollary}[theorem]{Corollary}
\newtheorem{question}[theorem]{Question}
\newtheorem{defi}[theorem]{Definition}
\newenvironment{definition}{\begin{defi} \rm}{ \end{defi}}
\newenvironment{defn}{\begin{defi} \rm}{ \end{defi}}
\newtheorem{exa}[theorem]{Example}
\newtheorem{rem}[theorem]{Remark}
\DeclareMathOperator{\tp}{tp}
\newcommand{\N}{\mathbb{N}}
\def \l{\mathcal{L}}
\def \Th{\operatorname{Th}}
\def \m{\mathcal{M}}
\def \u{\mathcal{U}}
\def \p{\mathcal{P}}
\def \n{\mathcal{N}}
\def \FP{\operatorname{FP}}
\title[Hindman's Theorem re-visited]{Hindman's theorem and idempotent types}
\author[U.~Andrews]{Uri Andrews}
\address{Department of Mathematics\\
University of Wisconsin\\
Madison, WI 53706-1388\\
USA}
\email{\href{mailto:andrews@math.wisc.edu}{andrews@math.wisc.edu}}
\urladdr{\url{http://www.math.wisc.edu/~andrews/}}
\author[I.~Goldbring]{Isaac Goldbring}
\address{Department of Mathematics, Statistics, and Computer Science, University of Illinois at Chicago, Science and Engineering Offices M/C 249, 851 S. Morgan St.\\
Chicago, IL, 60607}
\email{\href{mailto:isaac@math.uic.edu}{isaac@math.uic.edu}}
\urladdr{\url{homepages.math.uic.edu/~isaac/}}
\begin{document}

\begin{abstract}
Motivated by a question of Di Nasso, we show that Hindman's Theorem is equivalent to the existence of idempotent types in countable complete extensions of Peano Arithmetic.
\end{abstract}

\maketitle

\section{Introduction}

Recall that $X\subseteq \N$ is said to be an \emph{IP set} if there is infinite $Y\subseteq X$ such that every finite sum of distinct elements of $Y$ is in $X$.  Hindman's Theorem asserts that if $\N$ is partitioned into finitely many pieces, then one of the pieces is an IP set.

Hindman's original proof was very combinatorial in nature.  Later, Galvin and Glazer gave a ``soft" proof of Hindman's theorem using the notion of an \emph{idempotent ultrafilter}.  Recall that an ultrafilter $\u$ on $\N$ is said to idempotent if, for all $A\subseteq \N$, we have $$A\in \u \Leftrightarrow \{n\in \N \ : \ A-n\in \u\}\in\u;$$ here, $A-n:=\{x\in \N \ : \ x+n\in A\}$.  It is readily verified that all sets in an idempotent ultrafilter are IP sets, so to establish Hindman's theorem, it suffices to establish the existence of an idempotent ultrafilter.  This latter task can be accomplished via several applications of Zorn's lemma and essentially boils down to Ellis' theorem about compact semi-topological semigroups.

In \cite{dinasso}, Di Nasso asks whether or not there can be a ``nonstandard" proof of the existence of idempotent ultrafilters, presumably using only the same amount of choice needed to prove the existence of ordinary nonprincipal ultrafilters.  In order to formulate an attack on this problem, he establishes a purely model-theoretic formulation of the existence of idempotent ultrafilters:  there exists $\alpha,\beta \in \N^*$ satisfying the following two properties:
\begin{itemize}
\item for all $A\subseteq \N$, we have $\alpha\in A^*\Leftrightarrow \beta \in A^*\Leftrightarrow \alpha+\beta\in A^*$;
\item for all $B\subseteq \N^2$, if $(\alpha,\beta)\in B^*$, then there is $n\in \N$ such that $(n,\beta)\in B^*$.
\end{itemize}

By replacing $A$ and $B$ by 0-definable sets relative to some complete theory $T$ extending Peano Arithmetic (PA), it now makes sense to talk about \emph{idempotent types} in such theories.  In \cite{dinasso}, Di Nasso also asks for a sufficient condition to guarantee the existence of idempotent types in arbitrary complete extensions of PA (with an eye towards an answer to his earlier question).  The main result of this note is that Hindman's theorem is actually equivalent to the existence of idempotent types in arbitrary \emph{countable} complete extensions of PA; in particular, idempotent types always exist in such theories.  (We actually use the version of Hindman's theorem that states that the family of IP sets is \emph{partition regular}, meaning that if $X\subseteq \N$ is an IP set and $Y$ is a subset of $X$, then either $Y$ or $X\setminus Y$ is an IP set.  Accordingly, we show that idempotent types containing a prescribed 0-definable IP set always exist.)

It is not clear to us if the existence of idempotent types in countable complete extensions of PA can be used to obtain idempotent ultrafilters using some sort of compactness argument.  Conversely, it is not clear to us how to use idempotent ultrafilters to obtain idempotent types. 

Hindman's theorem and idempotent ultrafilters actually make sense in the much more general context of semigroups and so we prove all of our results in this more general context.

\subsection{Constructive consequences}

In \cite{tao}, Tao alludes to the fact that arguments in combinatorics involving idempotent ultrafilters are highly nonconstructive as one needs to use the axiom of choice multiple times to prove the existence of an idempotent ultrafilter.  (As a curiosity, it is not currently known whether or not the existence of idempotent ultrafilters on $\mathbb N$ is equivalent, over ZF, to the existence of nonprincipal ultrafilters on $\mathbb N$.)  In relation to this fact, here is a vague conjecture:

\begin{thesis}
Any argument in combinatorics utilizing the existence of idempotent ultrafilters could instead use idempotent types.
\end{thesis}

If this thesis is true, then by our main result, any argument in combinatorics utilizing idempotent ultrafilters could instead use Hindman's theorem directly; since Hindman's theorem can be proven constructively and our proof that Hindman's theorem implies the existence of idempotent types is also constructive, this would allow all arguments using idempotent ultrafilters to be made constructive.  It is not clear how one could prove (or even precisely formulate) the above thesis, but, intuitively speaking, in proving a result about a set $A$ of natural numbers, the argument involved should only mention sets definable from $A$ in second order arithmetic and so an idempotent type in an appropriate countable language should suffice to carry out the argument.

Concerning the reverse mathematical strength of our result, we show that the existence of idempotent types is enough to carry out the usual ultrafilter proof of Hindman's theorem using only RCA$_0$.  Conversely, proving the existence of idempotent types from Hindman's theorem seems to need to use $\Pi^1_1$-comprehension.



\section{Definitions}

By a \emph{semigroup structure} we mean a first-order structure $\m:=(M,\cdot,\ldots)$ in a countable language such that $(M,\cdot)$ is a semigroup; in this case, we say that $\m$ is \emph{based on} $(M,\cdot)$.

\begin{defn}
We say $q(x,y)\in S_2(M)$ is an \emph{independent type} if, for any $\varphi(x,y)\in q$, there is $u\in M$ such that $\varphi(u,y)\in q$.
\end{defn}

%

\begin{rem}
In model-theoretic lingo, independent types are simply heirs.  More precisely, if $(a,b)$ realizes $q$ (in some elementary extension of $\m$), then $q$ is independent if and only if $\tp(b/Ma)$ is an heir of $\tp(b/M)$.
\end{rem}

\begin{defn}
$p(x)\in S_1(M)$ is called an \emph{idempotent type} if there is an independent type $q(x,y)$ such that $p(x),p(y),p(x\cdot y)\subseteq q(x,y)$.
\end{defn}

\begin{rem}
In the definition of idempotent type, we do not insist that the type be non-principal.  In fact, an idempotent type $p(x)\in S(M)$ is principal if and only if $p(x)=\tp(a/M)$ for $a\in M$ idempotent.  We will have more to say about this at the end of the paper.
\end{rem}


\begin{rem}
Recall that the (model-theoretic) \emph{completion} of $\N$ is the structure $\N^\#$ with a symbol for every function and relation on $\N$ and a symbol for every element of $\N$.  In \cite{dinasso}, it is shown that if $T^\#:=\Th(\N^\#)$, then idempotent types for $T^\#$ are precisely the idempotent ultrafilters on $\N$.  The same observation (with an identical proof) actually holds for arbitrary semigroup structures.
\end{rem}


\begin{defn}
Let $(M,\cdot)$ be a semigroup.  If $(u_n)$ is a countable sequence from $M$, we define $\FP(u_n):=\{u_{i_1}\cdots u_{i_k} \ : \ i_1<\cdots <i_{k}\}$.  We call $X\subseteq M$ an \emph{IP set} if there is a sequence $(u_n)$ for which $\FP(u_n)\subseteq X$, in which case we refer to $(u_n)$ as a \emph{basis} for $X$.
\end{defn}

\section{Main results}
 In this section,  $(M,\cdot)$ denotes an arbitrary countable semigroup. 
%
%
%

\begin{statement}[Hindman's theorem for $(M,\cdot)$]\label{MHindman}
Let $X\subseteq M$ be an IP-set. Then for any $Y\subseteq X$, either $Y$ or $X\setminus Y$ is an IP-set.
\end{statement}
\begin{statement}[Existence of idempotent types for semigroup structures based on $(M,\cdot)$]\label{Midempotents}
If $\m=(M,\cdot,\ldots)$ is a semigroup structure based on $(M,\cdot)$ and $X\subseteq M$ is a $\m$-definable IP-set, then there is an idempotent type over $\m$ containing $X$.
\end{statement}

\begin{thm}
Statement \ref{MHindman} is equivalent to Statement \ref{Midempotents}. 
\end{thm}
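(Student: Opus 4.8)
The plan is to prove both directions of the equivalence, treating the semigroup $(M,\cdot)$ as fixed and countable throughout.

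For the direction Statement~\ref{MHindman} $\Rightarrow$ Statement~\ref{Midempotents}, I would fix a semigroup structure $\m=(M,\cdot,\ldots)$ in a countable language and a $\m$-definable IP-set $X$, and construct an idempotent type $p(x)$ containing $X$ together with a witnessing independent type $q(x,y)$. The natural strategy is a back-and-forth/enumeration argument: since the language is countable, there are only countably many formulas $\varphi(x)$ (to decide which go into $p$) and countably many formulas $\psi(x,y)$ (to decide which go into $q$), so I would enumerate them and build $p$ and $q$ in $\omega$ stages, maintaining at each stage a finite consistent set of commitments. The key invariants to preserve are: (i) the \emph{idempotent} condition $p(x),p(y),p(x\cdot y)\subseteq q(x,y)$, which forces me to put, for each $\varphi$ I place in $p$, the three formulas $\varphi(x)$, $\varphi(y)$, and $\varphi(x\cdot y)$ into $q$; and (ii) the \emph{independence} condition, that each $\psi(x,y)\in q$ admits some $u\in M$ with $\psi(u,y)\in q$. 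The crucial point where Hindman's theorem enters is in deciding, at each stage, whether to place a formula $\varphi$ or its negation into $p$: I would maintain that the partial type determines an IP-set (via its defining formula, starting from $X$), and use partition regularity (Statement~\ref{MHindman}) to guarantee that for any $\m$-definable $Y$, one of $Y$ or its complement keeps the accumulated finite conjunction an IP-set. Passing to a basis $(u_n)$ of the current IP-set and pushing tails of the $\FP$-set supplies the witnesses $u\in M$ needed for independence, since for an IP-set with basis $(u_n)$ the shifted sets $\{u_{i_1}\cdots u_{i_k}\}$ over tails remain IP-sets and let one satisfy the heir condition on the $y$-coordinate.

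For the converse, Statement~\ref{Midempotents} $\Rightarrow$ Statement~\ref{MHindman}, I would argue that the existence of an idempotent type containing a prescribed definable IP-set directly yields partition regularity. Given an IP-set $X\subseteq M$ and a subset $Y\subseteq X$, I want to show $Y$ or $X\setminus Y$ is an IP-set. The idea is to work in the semigroup structure $\m$ obtained by adjoining a unary predicate for $X$ (and for $Y$), so that $X$ is $\m$-definable; then Statement~\ref{Midempotents} furnishes an idempotent type $p$ containing $X$ with witness $q$. The standard fact—mirroring the proof that all sets in an idempotent ultrafilter are IP-sets—is that every formula in an idempotent type defines an IP-set. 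Since $p$ is complete, it contains exactly one of $Y$ or $X\setminus Y$, and whichever it contains is therefore an IP-set. The main lemma to verify here is precisely that membership of a definable set in an idempotent type implies it is an IP-set; I would prove this by unwinding the independence condition on $q$ to inductively extract a basis $(u_n)$, using the independence (heir) property to choose each successor element $u_{n+1}$ realizing the type over the finite set of $\FP$-sums already built.

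I expect the main obstacle to be the forward direction, specifically the simultaneous bookkeeping of the two conditions during the $\omega$-stage construction: the idempotent condition links $p$ and $q$ by forcing triples of formulas into $q$ whenever a formula enters $p$, while the independence condition imposes witness requirements on $q$ that must be met by genuine elements $u\in M$, and these two demands must be reconciled while preserving the IP-set property supplied by Hindman's theorem. The delicate step is arranging that the witness $u$ for independence can always be found so that the shifted constraints remain consistent with an IP-set—this is where one must pass to tail-bases $(u_n)_{n\geq N}$ of the current IP-set and verify that the finite-sum structure is preserved under the shift $x\mapsto u\cdot x$, so that partition regularity continues to apply at the next stage. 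Getting the induction hypothesis strong enough to carry both the IP-set invariant and the heir-witness invariant simultaneously is the technical heart of the proof.
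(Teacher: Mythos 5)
Your converse direction (Statement~\ref{Midempotents} $\Rightarrow$ Statement~\ref{MHindman}) is essentially the paper's own argument: expand $(M,\cdot)$ by predicates for $X$ and $Y$, use completeness of $p$ to decide which of $Y$, $X\setminus Y$ lies in $p$, and then extract a basis for that piece by alternately applying independence (to get witnesses $u_n\in M$) and idempotence (to push products back into the type), just as in the Galvin--Glazer argument. That half is correct in outline.

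The forward direction, however, has a genuine gap, located exactly at what you call the ``delicate step.'' You assert that the independence witness $u$ ``can always be found'' by passing to tail-bases of the current IP-set. This is false: there are formulas $\psi(x,y)$ for which \emph{no} $u\in M$ makes $A(y)\wedge\psi(u,y)$ define an IP set, no matter what basis or tail one uses. Concretely, in the semigroup of positive integers under addition, take $\psi(x,y):=(y<x)$; for every $u$ the set defined by $\psi(u,y)$ is finite, hence not IP, so no witness exists. Since $q$ must be a \emph{complete} $2$-type, such formulas cannot be left undecided and cannot be allowed to slip into an arbitrary completion (that would destroy independence); the construction must actively place $\neg\psi(x,y)$ into $q$, and you then owe a proof that these forced negative commitments are jointly consistent with the positive part $A(x)\cup A(y)\cup A(x\cdot y)$. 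That consistency is not automatic, and proving it is a second, equally essential application of Hindman's theorem, which your sketch never invokes (you use partition regularity only to decide the one-variable formulas of $p$): choosing $u$ in a basis for the set defined by $A$, the set $Z$ defined by $A(y)\wedge A(u\cdot y)$ is IP; each $Z\cap\psi_j(u,M)$, for the finitely many refused formulas $\psi_j$, is non-IP precisely because $\psi_j$ was refused; and partition regularity then forces $Z\cap\bigcap_j\neg\psi_j(u,M)$ to be IP, in particular nonempty. This refusal mechanism (the paper's sets $B_s$, its invariant (4), and the consistency Claim) is the technical heart of the proof, and without it your construction does not produce an independent type.
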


\begin{proof}[Proof that Statement \ref{MHindman} implies Statement \ref{Midempotents}:]
Let $(\varphi_i(x) \ : \ i<\omega)$ enumerate all $L(M)$-formulae in the free variable $x$ and let $(\psi_j(x,y) \ : j<\omega)$ enumerate all the $L(M)$-formulae in the free variables $x,y$.  Without loss of generality, assume that $\varphi_0$ is the formula that defines $X$.

We build an approximation to an independent type $q$ in stages. At every stage $s$, we build two finite sets of formulae:  $A_s(x)$ and $B_s(x,y)$.  Throughout the construction, we will maintain the following recursive assumptions:
\begin{enumerate}
\item For each $i\leq s$, exactly one of $\varphi_i$ or $\neg \varphi_i$ belongs to $A_s$;
\item $A_s(x)$ defines an IP subset of $M$;
\item There is $J_s\subseteq \{0,1,\ldots,s-1\}$ such that $B_s=\{\neg \psi_j \ : \ j\in J_s\}$.
\item For every $u\in M$ and every $j\in J_s$, $A_s(y)\cup \{\psi_j(u,y)\}$ does not define an IP subset of $M$.
\item For each $j\in \{0,1,\ldots,s-1\}\setminus J_s$, there is $u\in M$ so that $\psi_j(u,y)\in A_s(y)$.
\end{enumerate}

At stage $0$, we begin with $A_0(x)=\{\phi_0(x)\}$ and $B_0= \emptyset$.  Clearly (1)-(5) are satisfied at this stage.


Now assume that the construction has been carried out through stage $s$ and we show how to carry it through to stage $s+1$.  First, if $A_s(x)\cup \{\phi_{s+1}(x)\}$ defines an IP subset of $M$, then we set $A_{s+1}^0:=A_s(x)\cup\{\phi_{s+1}(x)\}$. Otherwise, we set $A_{s+1}^0:=A_s(x)\cup \{\neg \phi_{s+1}(x)\}$. By Statement \ref{MHindman}, in either case, $A_{s+1}^0$ defines an IP subset of $M$.

Now we shift our attention to $\psi_s(x,y)$. If there is $u\in M$ so that $A_{s+1}^0(y)\cup \{\psi_s(u,y)\}$ defines an IP subset of $M$, then we set $A_{s+1}(y)$ to be this set of formulae (for the least such $u$ with respect to some fixed ordering of $M$). Otherwise, set $A_{s+1}(y):=A_{s+1}^0(y)$ and put $\neg\psi_s(x,y)$ into $B_{s+1}$.  It is clear that (1)-(5) holds for $A_{s+1}$ and $B_{s+1}$. 



\noindent \textbf{Claim:}   For each $s$, $A_s(x)\cup A_s(y)\cup A_s(x\cdot y)\cup B_s(x,y)$ is consistent. 

\noindent \textbf{Proof of claim:}  Set $u$ to be the least element of $M$ (again, with respect some fixed ordering of $M$) that lies in a basis for the set defined by $A_s$.  It suffices to show that the set of formulae $$A_s(y)\cup A_s(u\cdot y)\cup B_s(u,y)$$ is consistent.  Let $Z$ denote the subset of $M$ defined by $A_s(y)\cup A_s(u\cdot y)$.  Since $u$ belongs to a basis for the set defined by $A_s$, it follows that $Z$ is an IP set.  By (4), $Z\cap \psi_j(u,M)$ is not an IP set for each $j\in J_s$.  By Statement \ref{MHindman}, we get that $Z\cap \bigcap_{j\in J_s} \neg \psi_j(u,M)$ is an IP set, whence is nonempty, proving the claim.   

By the claim, $q_0(x,y):=\bigcup_s (A_s(x)\cup A_s(y)\cup A_s(x\cdot y)\cup B_s(x,y))$ is a partial type over $M$.  By (1), the restriction of $q_0$ to the variable $x$ is a complete type $p(x)$ over $M$ and $p(x),p(y),p(x\cdot y)\subseteq q_0(x,y)$.  Let $q(x,y)\in S_2(M)$ be any completion of $q_0$.  It remains to prove that $q$ is independent.  Towards this end, fix $\theta(x,y)\in q$.  Take $s$ such that $\theta=\psi_s$.  Since $\neg\psi_s\notin B_{s+1}$, we have that $s\notin J_{s+1}$, so by (5), there is $u\in M$ such that $\psi_s(u,y)\in A_{s+1}(y)$, whence $\theta(u,y)\in q$.

\end{proof}

\begin{proof}[Proof that Theorem \ref{Midempotents} implies Theorem \ref{MHindman}:]
Fix an IP set $X\subseteq M$ and fix $Y\subseteq X$. Let $\mathcal{L}$ denote the language by $\{\cdot,X,Y\}$ and consider the semigroup structure $\m:=(M,\cdot, X,Y)$. Let $p(x)$ be an idempotent type contained in the independent type $q(x,y)$ containing the formula $X(x)$. Without loss of generality, we may assume $Y(x)$ belongs to $p$ (otherwise re-name $Y$ to define $X\smallsetminus Y$).

Set $\psi_1(x,y):=Y(x)\wedge Y(x\cdot y)$. Since $q$ witnesses that $p$ is idempotent, we have that $\psi_1(x,y)\in q$. Since $q$ is independent, there is a $u_1\in M$ so that $\psi_1(u_1,y)\in q$. Again, since $q$ witnesses that $p$ is idempotent, $Y(u_1\cdot x)\wedge Y(u_1\cdot x\cdot y)\in q$.

Let $\psi_2(x,y):=\psi_1(x,y)\wedge \psi_1(u_1\cdot x, y)$. Since $\psi_2(x,y)$ belong to $q$, there is $u_2\in M$ so that $\psi_2(u_2,y)\in p$. We now have that $u_1,u_2, u_1\cdot u_2\in Y$. Moreover, $\psi_2(u_2,x)\wedge \psi_2(u_2,x\cdot y)\in q$ Continuing in this manner, we construct a sequence $(u_i\mid i\in \omega)$ which is a basis for $Y$.
\end{proof}

\section{Musings on non-principality}

As mentioned above, in weird semigroups, IP sets can be finite, even singletons.  Likewise, idempotent types can be principal.  We mention here some conditions on semigroups that remove some of these trivialities.  

Here are two possible ways of making the notion of IP less trivial.

\begin{definition}
Suppose that $(M,\cdot)$ is a semigroup and $A\subseteq M$.
\begin{enumerate}
\item We say that $A$ is IIP (\emph{infinite IP}) if there is a sequence $(x_n)$ such that $\FP(x_n)\subseteq A$ and $\FP(x_n)$ is infinite.
\item We say that $A$ is DIP (\emph{distinctly IP}) if there is an injective sequence $(x_n)$ with $\FP(x_n)\subseteq A$.
\end{enumerate}
\end{definition}

Clearly DIP sets are IIP.  A class of semigroups where DIP is a good notion can be found in the literature:

\begin{definition}(Golan and Tsaban, \cite{golan})
We call a semigroup $(M,\cdot)$ \emph{moving} if $\beta M\setminus M$ is a subsemigroup of $\beta M$.
\end{definition}

There is a more combinatorial definition of moving semigroup, but let us be content with the ultrafilter definition.

\begin{lemma}
If $(M,\cdot)$ is moving, then $A\subseteq M$ is DIP if and only if there is a \emph{nonprincipal} idempotent ultrafilter $\u$ on $S$ containing $A$. 
\end{lemma}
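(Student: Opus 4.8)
The plan is to prove the two implications separately, with the genuinely new ingredient---the moving hypothesis---entering only in the direction that produces a \emph{nonprincipal} idempotent. Throughout I would work in $\beta M$, the space of ultrafilters on $M$ equipped with the usual right-topological extension of $\cdot$, under which $A\in\u\cdot\mathcal V$ iff $\{x\in M : x^{-1}A\in\mathcal V\}\in\u$, where $x^{-1}A:=\{y\in M : x\cdot y\in A\}$; recall that the principal ultrafilters are exactly the points of $M\subseteq\beta M$, so $\beta M\setminus M$ is precisely the set of nonprincipal ultrafilters. I would freely use Ellis' theorem (every nonempty compact right-topological semigroup contains an idempotent) together with the Galvin--Glazer fact already recalled in the introduction, that every member of an idempotent ultrafilter is an IP set.

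For the direction that starts from a nonprincipal idempotent $\u$ with $A\in\u$, I would run the usual Galvin--Glazer recursion while keeping track of injectivity. Setting $C^\star:=\{x\in C : x^{-1}C\in\u\}$ for $C\in\u$, idempotency gives $C^\star\in\u$ and $x^{-1}(C^\star)\in\u$ for $x\in C^\star$. One then builds $(x_n)$ together with a decreasing sequence $B_0\supseteq B_1\supseteq\cdots$ of members of $\u$ contained in $A^\star$, at each stage choosing $x_{n+1}\in B_n$ and passing to a smaller $B_{n+1}\in\u$ so as to guarantee $\FP(x_n)\subseteq A$; this is the standard bookkeeping. The only new point is that, since $\u$ is nonprincipal, each $B_n$ is infinite, so at stage $n+1$ we may choose $x_{n+1}\in B_n$ distinct from $x_1,\dots,x_n$, making the resulting basis injective and witnessing that $A$ is DIP. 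This half uses only nonprincipality, not the moving hypothesis.

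For the converse I would fix an injective basis $(x_n)$ for $A$ and put $T_n:=\FP(x_k : k\ge n)$, so that $T_0\subseteq A$ and each $T_n$ is infinite because the $x_k$ are distinct. The key combinatorial observation is that for $s=x_{i_1}\cdots x_{i_k}\in T_n$ and any $m$ exceeding the indices appearing in $s$ one has $s\cdot T_m\subseteq T_n$. Using this one checks in the standard way that $\Gamma:=\bigcap_n\overline{T_n}$ is a nonempty closed subsemigroup of $\beta M$: nonemptiness is the finite intersection property for a nested family of nonempty closed sets, and closure under $\cdot$ follows since for $\u,\mathcal V\in\Gamma$ and $s\in T_n$ we get $\{y : s\cdot y\in T_n\}\supseteq T_m\in\mathcal V$ (for $m$ large), whence $\{x : x^{-1}T_n\in\mathcal V\}\supseteq T_n\in\u$ and so $T_n\in\u\cdot\mathcal V$. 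I would then intersect with the nonprincipal ultrafilters: since $M$ is open in $\beta M$, the set $\beta M\setminus M$ is compact, and it is a subsemigroup \emph{precisely because} $(M,\cdot)$ is moving. Each $\overline{T_n}\setminus M$ is nonempty (as $T_n$ is infinite) and closed, and the family is nested, so $\Gamma\cap(\beta M\setminus M)=\bigcap_n(\overline{T_n}\setminus M)$ is a nonempty compact subsemigroup. Applying Ellis' theorem to it yields an idempotent $\u$ that is automatically nonprincipal; since $\u\in\Gamma\subseteq\overline{T_0}$ we have $T_0\in\u$ and hence $A\in\u$.

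The crux, and the only place the hypothesis is truly needed, is the claim that $\Gamma\cap(\beta M\setminus M)$ is a subsemigroup: this is exactly where ``moving'' is used, guaranteeing that the product of two nonprincipal ultrafilters in $\Gamma$ stays nonprincipal, so that Ellis' theorem produces an idempotent \emph{away from} $M$. Without this hypothesis the idempotent manufactured inside $\Gamma$ could in principle be a principal idempotent element of $M$, which is precisely the obstruction the definition of moving is designed to remove. The remaining verifications---that $\beta M\setminus M$ is closed (equivalently, that each principal ultrafilter is an isolated point of $\beta M$) and that the multiplication inherited by the closed subsemigroup is still right-continuous---are routine.
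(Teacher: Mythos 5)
Your proof is correct and follows essentially the same route as the paper: the DIP direction builds $\bigcap_m\bigl(\overline{\FP(x_n)_{n\ge m}}\setminus M\bigr)$, uses the moving hypothesis to make it a compact subsemigroup of the nonprincipal ultrafilters, and applies Ellis' theorem, while the converse runs Galvin--Glazer with nonprincipality supplying a fresh element at each stage. You have merely filled in the details the paper leaves as ``the usual argument.''
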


\begin{proof}
If $A$ is DIP as witnessed by $(x_n)$, then $T:=\bigcap_{n=m}^\infty (\overline{\FP(x_n)_{n=m}^\infty}\cap (\beta S\setminus S))$ is a nonempty compact subsemigroup of $\beta S$.  If $\u\in T$ is idempotent, then $A\in \u$.  The converse follows from the usual argument, using the fact that one can always find a fresh element at every stage of the construction.
\end{proof}

\begin{corollary}
In moving semigroups, the notion of being DIP is partition regular.
\end{corollary}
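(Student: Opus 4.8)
The plan is to deduce partition regularity of DIP directly from the ultrafilter characterization supplied by the preceding Lemma: once the combinatorial notion of being DIP is translated into membership in some \emph{nonprincipal} idempotent ultrafilter, the conclusion becomes a formal consequence of the basic closure properties of ultrafilters. Indeed, for any fixed nonempty family of ultrafilters, the collection of sets belonging to at least one member of the family is automatically partition regular, since an ultrafilter containing a set $A$ must contain one of the two pieces in any partition of $A$. Unwinding the definition of partition regular as stated for IP sets, what I must show is that if $A\subseteq M$ is DIP and $Y\subseteq A$, then either $Y$ or $A\setminus Y$ is DIP.

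First I would apply the Lemma to the DIP set $A$ to obtain a nonprincipal idempotent ultrafilter $\u$ on $M$ with $A\in\u$. Since $\u$ is an ultrafilter, exactly one of $Y$ or $M\setminus Y$ lies in $\u$; intersecting with $A\in\u$ and using that $\u$ is closed under finite intersections and supersets, I conclude that either $Y\in\u$ or $A\setminus Y=A\cap(M\setminus Y)\in\u$. In either case, the surviving piece belongs to the very same nonprincipal idempotent ultrafilter $\u$. Applying the other direction of the Lemma to this piece then shows that it is DIP, which completes the argument.

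I do not expect any genuine obstacle here, since all of the substantive work is already carried out in the Lemma; in particular, the \emph{moving} hypothesis is precisely what guarantees that the idempotent ultrafilter witnessing DIP can be taken to be nonprincipal, and that hypothesis is invoked only through the Lemma. The single point deserving a moment's care is that the \emph{same} ultrafilter $\u$ must be reused in both applications of the Lemma, so that nonprincipality and idempotency are preserved when passing from $A$ to its surviving piece. This is immediate because $\u$ is fixed once and for all at the outset, so the corollary follows at once.
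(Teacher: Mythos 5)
Your proposal is correct and is exactly the argument the paper intends: the corollary is stated without proof as an immediate consequence of the preceding Lemma, deduced precisely by fixing one nonprincipal idempotent ultrafilter containing the DIP set, using the ultrafilter property to select the surviving piece of the partition, and applying the converse direction of the Lemma to that piece.
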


Observe that in a moving semigroup, to conclude that $A$ belonged to a nonprincipal idempotent ultrafilter, all that was really used was that $A$ was IIP.  It thus follows that:

\begin{corollary}
In moving semigroups, the notions IIP and DIP coincide.
\end{corollary}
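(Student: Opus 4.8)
The plan is to prove the nontrivial implication, that IIP implies DIP; the reverse implication is immediate, since the singleton products $x_n$ already witness that $\FP(x_n)$ is infinite. The route is through the preceding lemma: I will show that an IIP set $A$ is contained in a nonprincipal idempotent ultrafilter, and then invoke the converse half of the lemma to recover that $A$ is DIP. Thus the whole argument reduces to re-reading the forward direction of the lemma's proof and checking that the hypothesis DIP was never genuinely needed --- only IIP.

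Concretely, suppose $A$ is IIP, witnessed by a sequence $(x_n)$ with $\FP(x_n)$ infinite. For each $m$, write $T_m:=\overline{\FP((x_n)_{n\ge m})}\cap(\beta M\setminus M)$, where the closure is taken in $\beta M$. The only place in the lemma's proof where a property of $(x_n)$ stronger than ``$\FP(x_n)\subseteq A$'' is used is in asserting that $T:=\bigcap_m T_m$ is nonempty, so the key step is to verify this nonemptiness from IIP alone. Since $\FP((x_n)_{n\ge m+1})\subseteq\FP((x_n)_{n\ge m})$, the $T_m$ form a decreasing chain of closed subsets of the compact space $\beta M$, whence $T$ is nonempty as soon as each $T_m$ is; and $T_m\neq\emptyset$ precisely when $\FP((x_n)_{n\ge m})$ is infinite, because a point of $\overline{S}\cap(\beta M\setminus M)$ exists exactly when $S\subseteq M$ is infinite.

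It thus suffices to show that infiniteness of $\FP(x_n)$ forces every tail $\FP((x_n)_{n\ge m})$ to be infinite. This is the heart of the matter, and I expect it to be the main (if modest) obstacle, as it is the single point at which the distinction between IP and IIP is felt. I would argue from the decomposition
\[
\FP((x_n)_{n\ge m})=\{x_m\}\cup\FP((x_n)_{n> m})\cup x_m\cdot\FP((x_n)_{n> m}).
\]
If some tail $\FP((x_n)_{n> m})$ were finite, then this identity would make $\FP((x_n)_{n\ge m})$ finite as well, and iterating downward would force $\FP(x_n)=\FP((x_n)_{n\ge 0})$ to be finite, contrary to IIP. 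Hence all tails are infinite, each $T_m$ is nonempty, and $T\neq\emptyset$.

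Finally I would close exactly as in the lemma. The set $T$ is a closed subset of the compact space $\beta M$, hence compact, and it is a subsemigroup, since $\bigcap_m\overline{\FP((x_n)_{n\ge m})}$ is a subsemigroup of $\beta M$ and, by the moving hypothesis, so is $\beta M\setminus M$. Ellis' theorem then produces an idempotent $\u\in T\subseteq\beta M\setminus M$; this $\u$ is nonprincipal, and it contains $A$ because $A\supseteq\FP(x_n)\in\u$. Applying the converse direction of the lemma to $\u$ yields that $A$ is DIP, which together with the trivial reverse implication gives the desired coincidence.
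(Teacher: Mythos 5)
Your proposal is correct and takes essentially the same route as the paper: the paper derives this corollary precisely by observing that the forward direction of the preceding lemma only uses that $A$ is IIP (producing a nonprincipal idempotent ultrafilter containing $A$), and then the lemma's converse direction yields that $A$ is DIP. Your explicit argument that every tail $\FP((x_n)_{n\ge m})$ is infinite fills in a detail that the paper's one-sentence observation leaves implicit, but it is the same proof.
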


Here is an admittedly ad hoc definition:

\begin{definition}
We call a semigroup $(M,\cdot)$ \emph{Hindman} if the notion of being IIP is partition regular.
\end{definition}

It follows from the above corollaries that moving semigroups are Hindman.

The following theorem follows immediately from the proofs in the preceding section:

\begin{theorem}
Let $(M,\cdot)$ be a semigroup.
\begin{enumerate}
\item Suppose that $(M,\cdot)$ is Hindman and $\m$ is a semigroup structure based on $(M,\cdot)$.  Then for every $\m$-definable $X\subseteq M$ that is IIP, there is a \emph{nonprincipal} idempotent type containing the formula $X(x)$.
\item Suppose that for every semigroup structure $\m$ based on $(M,\cdot)$ and every $\m$-definable $X\subseteq M$ that is IIP, there is a nonprincipal idempotent type containing $X(x)$.  Then $(M,\cdot)$ is \emph{really} Hindman, meaning that whenever $X\subseteq M$ is IIP and $X=Y\cup Z$, then one of $Y$ or $Z$ is DIP.  
\end{enumerate}
\end{theorem}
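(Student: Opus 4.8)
The plan is to re-run the two implications proved in the previous section essentially verbatim, replacing the notion of IP-set by that of IIP-set everywhere and invoking the new hypotheses in place of Statement \ref{MHindman}. Two genuinely new points must be added: in part (1) we must check that the type produced is \emph{nonprincipal}, and in part (2) we must arrange that the basis we build is \emph{injective}. Both points turn on the single observation that an IIP-set is infinite.

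For part (1), I would reproduce the construction from the proof that Statement \ref{MHindman} implies Statement \ref{Midempotents}, building finite sets $A_s(x)$ and $B_s(x,y)$ subject to the same recursive assumptions (1)--(5), except that assumptions (2) and (4) now read ``$A_s(x)$ defines an IIP-set'' and ``$A_s(y)\cup\{\psi_j(u,y)\}$ does not define an IIP-set''. At stage $0$ we start from the $\m$-definable IIP-set $X$, and the two appeals to partition regularity (when deciding $\varphi_{s+1}$ versus $\neg\varphi_{s+1}$, and inside the Claim when intersecting with $\bigcap_{j\in J_s}\neg\psi_j(u,M)$) are now justified by the hypothesis that $(M,\cdot)$ is Hindman, i.e.\ that IIP is partition regular. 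The only point needing care is that the set $Z$ appearing in the Claim is still IIP: if $u=x_{i_0}$ lies in a basis $(x_n)$ for the IIP-set defined by $A_s$, then $\FP\big((x_n)_{n>i_0}\big)$ is again infinite (only finitely many products use indices $\le i_0$), so it witnesses that $Z$ is IIP. This yields a complete type $p(x)$ with $p(x),p(y),p(x\cdot y)\subseteq q(x,y)$, $q$ independent, and $X(x)\in p$. Finally, $p$ is nonprincipal: a principal type over the model $\m$ is realized by some $a\in M$ and hence isolated by a formula $\varphi$ with $\varphi(\m)=\{a\}$; but any $\varphi\in p$ lies in $A_{s_0}$ for some $s_0$, so $A_{s_0}(\m)\subseteq\{a\}$, contradicting that $A_{s_0}$ defines an (infinite) IIP-set.

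For part (2), given an IIP-set $X=Y\cup Z$, I would form $\m:=(M,\cdot,X,Y)$ and apply the hypothesis to the $\m$-definable IIP-set $X$ to obtain a nonprincipal idempotent type $p$, contained in an independent $q$, with $X(x)\in p$. Since $p$ is complete it decides $Y$; if $Y(x)\in p$ I aim to show $Y$ is DIP, and if $\neg Y(x)\in p$ then $(X\setminus Y)(x)\in p$ and I show that $X\setminus Y$ (hence its superset $Z$) is DIP. In either case I run the basis construction from the proof that Statement \ref{Midempotents} implies Statement \ref{MHindman}, with one addition: at the stage where $u_n$ is selected, I conjoin to $\psi_n$ the formula $\bigwedge_{i<n} x\neq u_i$. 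Because $p$ is nonprincipal, $x\neq u_i\in p$ for every $i$ and hence $x\neq u_i\in q$, so this conjunction still lies in $q$ and independence supplies a witness $u_n$ distinct from $u_1,\dots,u_{n-1}$. The resulting basis $(u_n)$ is therefore injective, witnessing that the relevant set is DIP.

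I expect the main obstacle to be bookkeeping rather than conceptual: verifying that ``IIP'' is genuinely preserved at every step where the original argument used ``IP'' (in particular the tail-of-basis infiniteness noted above), and pinning down the two nonprincipality arguments precisely. Everything else is copied unchanged from the proofs of the preceding section.
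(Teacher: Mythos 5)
Your proposal is correct and matches the paper's intent exactly: the paper's entire proof of this theorem is the remark that it ``follows immediately from the proofs in the preceding section,'' and you have carried out precisely those proofs with the right modifications (IIP for IP, Hindman-ness in place of Statement \ref{MHindman}, nonprincipality of $p$ from the infiniteness of IIP sets, and injectivity of the basis from nonprincipality of $p$). One small repair: your parenthetical justification that $\FP\big((x_n)_{n>i_0}\big)$ is infinite --- ``only finitely many products use indices $\le i_0$'' --- is not literally true (infinitely many products may involve such indices), but the conclusion does hold because every element of $\FP(x_n)$ is either in the finite set $\FP\big((x_n)_{n\le i_0}\big)$, or in $\FP\big((x_n)_{n>i_0}\big)$, or a product $h\cdot t$ with $h$ in the former and $t$ in the latter, so if the tail's $\FP$ were finite then $\FP(x_n)$ would be finite.
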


\begin{corollary}
In Hindman semigroups, the notions IIP and DIP coincide.
\end{corollary}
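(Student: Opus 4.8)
The plan is to chain together the two parts of the preceding theorem and then exploit a trivial decomposition. First I would record that DIP always implies IIP (this was already noted immediately after the definition), so the entire content of the corollary is the reverse implication: in a Hindman semigroup, every IIP set is DIP.

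To obtain this, the key observation is that the hypothesis of part (2) of the preceding theorem is \emph{exactly} the conclusion of part (1). Concretely, since $(M,\cdot)$ is assumed Hindman, part (1) applies to \emph{every} semigroup structure $\m$ based on $(M,\cdot)$ and yields, for each such $\m$ and each $\m$-definable IIP set $X$, a nonprincipal idempotent type containing the formula $X(x)$. Quantifying over all such $\m$, this is precisely the assumption required to invoke part (2). Applying part (2), I conclude that $(M,\cdot)$ is \emph{really} Hindman: whenever $X\subseteq M$ is IIP and $X=Y\cup Z$, one of $Y$ or $Z$ is DIP.

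Finally, I would deduce that IIP $\Rightarrow$ DIP from the really Hindman property via the trivial covering. Given an IIP set $X$, write $X=X\cup\emptyset$. Since $\emptyset$ cannot be DIP (an injective sequence is infinite, so its set of finite products $\FP(x_n)$ is nonempty and hence not contained in $\emptyset$), the really Hindman property forces $X$ itself to be DIP. Together with the easy direction, this shows that IIP and DIP coincide in a Hindman semigroup.

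There is no genuine obstacle here; the corollary is a formal consequence of the preceding theorem. The only point that requires care is recognizing that the conclusion of part (1), once universally quantified over all semigroup structures $\m$ based on $(M,\cdot)$, matches the hypothesis of part (2) verbatim, so that the two halves compose to give ``Hindman implies really Hindman,'' at which point the singleton-free decomposition $X=X\cup\emptyset$ finishes the argument.
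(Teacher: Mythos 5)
Your proposal is correct and is exactly the argument the paper leaves implicit: part (1) of the preceding theorem, quantified over all semigroup structures based on $(M,\cdot)$, supplies verbatim the hypothesis of part (2), so Hindman implies really Hindman, and a trivial decomposition such as $X = X\cup\emptyset$ (or even $X = X\cup X$) then upgrades every IIP set to a DIP set, with the converse direction being the already-noted triviality. Nothing further is needed.
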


\begin{question}
Do the notions IIP and DIP coincide in every semigroup?  Does the property that the DIP sets are partition regular characterize moving semigroups?  Is every Hindman semigroup moving?
\end{question}

A positive answer to the second question yields a positive answer to the third question.

\end{document}